\documentclass[preprint,12pt]{elsarticle}




\usepackage{amsmath,amssymb,amsthm}

\usepackage{algorithm}
\usepackage{algpseudocode}
\usepackage{mathtools}
\usepackage{geometry}

\usepackage{graphicx}
\usepackage{subcaption}
\usepackage{booktabs}
\usepackage{makecell}

\usepackage{enumitem}
\usepackage{url}
\usepackage{float}

\newtheorem{theorem}{Theorem}
\newtheorem{lemma}{Lemma}
\newtheorem{corollary}{Corollary}

\journal{Transportation Research Part C}

\begin{document}

\begin{frontmatter}

\title{Using Model Predictive Control To Reduce Traffic Emissions on Urban Freeways}

\author[DTU]{Alexander Hammerl}
\author[DTU]{Ravi Seshadri}
\author[DTU]{Thomas Kjær Rasmussen}
\author[DTU]{Otto Anker Nielsen}

\affiliation[DTU]{%
  organization={Department of Management, Engineering and Technology, Technical University of Denmark},
  addressline={Akademivej 358}, 
  city={Lyngby},
  postcode={2800}, 
  country={Denmark}
}

\begin{abstract}
Urban traffic congestion significantly impacts regional air quality and contributes substantially to pollutant emissions. Suburban freeway corridors are a major source of traffic-related emissions, particularly nitrogen oxides ($\text{NO}_x$) and carbon dioxide ($\text{CO}_2$). This paper proposes a Model Predictive Control (MPC) framework aimed at emission reduction on peripheral freeway corridors. Emission rates on freeways exhibit high sensitivity to speed fluctuations and congestion recovery processes. To address this relationship, we develop and analyze a bounded-acceleration continuum traffic flow model. By introducing an upper limit on vehicle acceleration capabilities, we enhance behavioral realism through the incorporation of driver responses to congestion, which is widely recognized as a main cause of the important capacity drop phenomenon. Our approach implements dynamically optimized variable speed limits (VSLs) at strategic corridor locations, balancing the dual objectives of minimizing both travel time and emissions as quantified by the COPERT V \cite{Emep2023} model. Numerical simulations demonstrate that this framework effectively manages congestion and reduces emissions across various traffic demand scenarios.
\end{abstract}


\begin{highlights}
\item We introduce a bounded-acceleration macroscopic traffic flow model that captures capacity drop phenomena.

\item We formulate an analytically tractable Model Predictive Control (MPC) framework to optimize Variable Speed Limits (VSL) on suburban freeways.

\item We integrate the COPERT V macroscopic emission model to simultaneously minimize travel time and pollutant emissions.

\item We demonstrate through simulations that our control strategy significantly reduces emissions with only modest travel time increases, even under demand uncertainty.
\end{highlights}

\begin{keyword}
Model Predictive Control \sep Variable Speed Limit \sep Emission Reduction \sep Traffic Flow Theory \sep Bounded Acceleration Model

\end{keyword}

\end{frontmatter}



\section{Introduction}
Urban traffic congestion poses significant challenges to sustainable mobility, degrades air quality, and increases pollutant emissions. While traditional traffic management strategies primarily focus on enhancing mobility and reducing delays, emissions reduction is also important for sustainable urban traffic policies (cf. \cite{VanDenBerg2007}, \cite{Zegeye2011}, \cite{Zegeye2012}, \cite{Zegeye2013} for previous works). 
Variable Speed Limit (VSL) systems are dynamic traffic management tools that adjust posted speed limits in response to real-time traffic conditions through overhead or roadside display signs. The review paper by \cite{Khondaker2015} summarizes the operational benefits of traditional VSL applications as follows: improved safety,  prevention of traffic breakdown, and increase of throughput at bottlenecks. The latter two are possible due to the capacity drop phenomenon, where the maximum flow rate at an active bottleneck can drop below the theoretical capacity once congestion sets in (\cite{Banks1990}, \cite{Hall1991},\cite{Cassidy1999},\cite{Bertini2005}, \cite{Jin2017}) - a reduction that VSL systems can help prevent by strategically slowing down approaching traffic. Modern VSL control strategies (e.g. \cite{Hegyi2005}, \cite{Papageorgiou2008}, \cite{Carlson2010}, \cite{Chen2014}, \cite{Carlson2014},\cite{Jin2015}, \cite{Khondaker2015a}) are based on predictive modeling of traffic behavior. They employ a closed-loop feedback control mechanism to continuously update its predictions and control actions based on new traffic data. These predictions typically rely on either macroscopic or microscopic traffic flow models—such as derivatives of Payne’s model (\cite{Payne1971}, \cite{Whitham2011}), the LWR model, or car-following models ( e.g. \cite{Bando1995}, \cite{Treiber2000}, \cite{Helbing2001}, \cite{Kesting2010}).

Microscopic traffic models offer high resolution and detailed behavioral representations but require fine-grained input data and are computationally intensive, which can limit their utility for large-scale real-time control. In contrast, macroscopic models aggregate vehicle behavior into flow, density, and speed fields, offering superior scalability and robustness to input uncertainty. A similar distinction holds for emission modeling: while microscopic approaches (e.g. \cite{Ahn2008}) can capture engine dynamics in detail, macroscopic emission models - such as COPERT - offer tractable, speed-dependent aggregate emission rates and are widely adopted in policy contexts.

Higher-order macroscopic models attempt to incorporate dynamic driver behavior through reaction and anticipation terms. Earlier, yet still popular representatives of this model class have been the subject of critical debate due to problematic features such as faster-than-vehicle characteristic speeds, negative flows, and nonphysical shock structures (see \cite{Daganzo1995}, \cite{Helbing2009}). More recent formulations (see \cite{Aw2000}, \cite{Zhang2002} and extensions) resolve these deficiencies but their analytical complexity and purely phenomenological nature remains a limitation, especially for time-critical control applications.

In this work, we adopt a macroscopic modeling framework that extends the classical LWR theory by incorporating bounded acceleration constraints, extending the approach of \cite{Lebacque2002, Lebacque2003, Jin2018}. This Bounded Acceleration LWR (BA-LWR) model retains the computational efficiency and robustness of first-order theories while addressing their key deficiencies—most notably the unrealistic assumption of infinite acceleration. Unlike classical kinematic wave theories, the BA-LWR model provides an endogenous mechanism for the capacity drop phenomenon (see e.g. \cite{Jin2015a}, \cite{Jin2017b}), captures key second-order effects such as congestion recovery, and enables the derivation of analytical expressions for vehicle travel times evolution, which is an essential feature for predictive control design. Compared to classical higher-order models, our approach strikes a balance between behavioral realism, interpretability, and analytical tractability, making it well suited for implementation in a receding-horizon Model Predictive Control (MPC) framework. 

The rest of the paper is structured as follows: Section \ref{section:bamodel} presents the applied traffic flow model; Section \ref{section:infra_em} discusses modeling choices for road infrastructure and vehicle emissions; Section \ref{section:solution} formulates the MPC optimization problem and discusses results; Section \ref{section:numerical} presents numerical examples that validate the efficacy and robustness of the proposed control strategy; and Section \ref{section:conclusion} concludes with future research directions.

\section{A Bounded-Acceleration Traffic Flow Model}
\label{section:bamodel}
We employ a continuum approach to traffic flow modeling by conceptualizing traffic propagation as a continuous medium, rather than considering individual vehicle entities (see e.g. \cite{Jin2021}). The mathematical formulation of the model describes the relationship between the aggregate parameters traffic density k, flow rate q, and average velocity v, and proceeds as follows: The rate of change in the total number of vehicles contained in any road segment \( [x_1, x_2] \) where $x_2>x_1$ is equal to the net flow of vehicles out of the segment, i.e.
\begin{equation}
\frac{d}{dt} \int_{x_1}^{x_2} k(x,t) \, dx = - \left[ q(x,t) \right]_{x_1}^{x_2}.
\label{eq:integralconservation}
\end{equation}
If \( k \) and \( q \) are differentiable, \eqref{eq:integralconservation} can be transformed to obtain the partial differential equation
\begin{equation}
	\frac{\partial k}{\partial t} + \frac{\partial q}{\partial x} = 0.
	\label{eq:conservation}
\end{equation}
The seminal work in this field is the Lighthill-Whitham-Richards (LWR)(\cite{Lighthill1955},\cite{Richards1956}) theory, which assumes the existence of a functional relationship between q and k under differentiable conditions:

\begin{equation}
	q(x,t)=Q(x,k(x,t)),
	\label{eq:fundamental}
\end{equation}

where $Q$ is a concave, non-negative function that is equal to zero at $k=0$ and at the \textit{jam density} $k=k_j$. The third parameter, $v$, is given by the fundamental relationship $q = v \cdot k$. On substituting equation \ref{eq:conservation} into \ref{eq:fundamental}, we obtain 
\begin{equation}
	\frac{\partial k}{\partial t} + \frac{dQ}{dk} \cdot \frac{\partial k}{\partial x} = 0,
	\label{eq:differentiallwr}
\end{equation}
which defines a unique solution for $k(x,t)$ and $q(x,t)$ for given initial and boundary conditions if $q$ and $k$ are differentiable.
For solutions at discontinuities, additional entropy conditions are required to ensure uniqueness (see e.g. \cite{Lebacque1996}, \cite{Jin2009}), \cite{Holden2015}.  In this work, we employ the entropy solution 
\begin{equation}
Q(x,t) = \min \left[ D ( K(x-,t), x ), S ( K(x+,t), x ) \right],
\end{equation}
where $D(\kappa, x)$ and $S(\kappa,x)$ represent traffic demand and supply, respectively \cite{Lebacque1996}:
\begin{equation}
\label{eq:dem}
D(\kappa, x) =
\begin{cases} 
Q(\kappa, x-) & \text{if } \kappa \leq k_{\text{crit}}(x-) \\[10pt]
Q_{\max}(x-) & \text{if } \kappa \geq k_{\text{crit}}(x-),
\end{cases}
\end{equation}

\begin{equation}
\label{eq:sup}
S(\kappa, x) =
\begin{cases} 
Q_{\max}(x+) & \text{if } \kappa \leq k_{\text{crit}}(x+) \\[10pt]
Q(\kappa, x+) & \text{if } \kappa \geq k_{\text{crit}}(x+).
\end{cases}
\end{equation}
Here, $Q_{\text{max}}$ denotes the maximum value of $Q(k)$ and $k_{\text{crit}}(x)$ denotes the critical density such that $Q(k_{\text{crit}})=Q_{\text{max}}$. This condition can be formulated equivalently: A discontinuity in the initial value conditions leads to the formation of a shock wave when velocity decreases, while at increasing velocity results in a rarefaction fan \cite{Lebacque2003}. Lebacque (\cite{Lebacque2002}, \cite{Lebacque2003}) overcomes the deficiencies of potentially infinite accelerations in the original LWR theory by introducing a constant upper limit for the acceleration of a vehicle. This work extends Lebacque's theory by considering a vehicle's maximum acceleration not as a fixed value, but as a function of the current velocity, $A(v)$. The constraint $A'(v)<0$ is common in traffic flow modeling; however, it is not necessary for the validity of our derivations. For the convective acceleration of a moving vehicle, \( \frac{dv}{dt} \), the relationship \( \frac{dv}{dt} = \frac{\partial v}{\partial t} + \frac{dx}{dt} \cdot \frac{\partial v}{\partial x} = \frac{\partial v}{\partial t} + v \cdot \frac{\partial v}{\partial x} \) applies. The resulting two-phase model ("equilibrium" and "bounded acceleration") can therefore be characterized by the following equation:

\begin{equation}
\label{equation:ba}
\left\{
\begin{array}{l}
v(x,t) = V(k(x,t)) \\[10pt]
v(x,t) \leq V(k(x,t))
\end{array}
\right.
\quad \text{if} \quad
\begin{array}{l}
\frac{\partial v}{\partial t} + v \frac{\partial v}{\partial x} \leq A(v) \quad \text{EQ phase} \\[10pt]
\frac{\partial v}{\partial x} + v \frac{\partial v}{\partial x} = A(v) \quad \text{BA phase}
\end{array}
\end{equation}

The BA phase of the system can be formulated as a system of hyperbolic conservation laws with relaxation (see e.g. \cite{Liu1987}, \cite{Jin2001}) as follows:
\begin{equation}
\label{eq:conservative}
\partial_t U + \partial_x \Phi(U) = S(U), \quad
S(U) = \begin{pmatrix} 0 \\ a(V) \end{pmatrix},
\end{equation}
where the conserved variables and flux function are given by
\[
U = \begin{pmatrix} K \\ V \end{pmatrix}, 
\Phi(U) = \begin{pmatrix} K V \\ \frac{V^2}{2} \end{pmatrix}.
\]

The Jacobian of the flux $\Phi$ is
\[
D\Phi(U) = 
\begin{pmatrix}
\frac{\partial (KV)}{\partial K} & \frac{\partial (KV)}{\partial V} \\
0 & \frac{\partial (V^2/2)}{\partial V}
\end{pmatrix}
=
\begin{pmatrix}
V & K \\
0 & V
\end{pmatrix}.
\]

Since this matrix is upper triangular, its eigenvalues are given by its diagonal entries:
\[
\lambda_1(U) = \lambda_2(U) = V.
\]

In line with the standard interpretation in traffic flow theory (see, e.g., \cite{Daganzo1995}, \cite{Helbing2009}), these characteristic speeds describe the propagation velocity of small perturbations. In this case, perturbations travel at the same speed as the vehicles themselves.

Let \( x_0(t) \) be a boundary between an EQ and a BA phase according to \eqref{equation:ba}, such that homogeneous congestion conditions exist to the left of the boundary:
\[
k(x,t) = k^+, \quad q(x,t) = Q(k^+) \quad \text{for all } (x,t) \text{ with } x \leq x_0(t),
\]
where \( k^+ > k^{\text{crit}} \).  Let $r(\cdot):=x_0'(t)$ denote the propagation speed of the phase boundary. The property presented in Theorem \ref{theorem:qklinearity} is helpful for the construction of analytical solutions and was demonstrated by \cite{Lebacque2003} for constant acceleration curves $A(v)=A_0$. In the following, we demonstrate its validity for arbitrary acceleration functions $A(v) \in C^1$.

\begin{theorem}
\label{theorem:qklinearity}
The parametric curve along a vehicle trajectory starting at \( x_0(t) \) in the \( q \)-\( k \) phase plane always forms a straight line with slope \( -x_0'(t) \).
\end{theorem}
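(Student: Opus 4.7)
The plan is to pass to a Lagrangian description inside the BA region. I would index each vehicle by the time $\tau$ at which it crosses the phase boundary $x_0$, obtain closed-form expressions for its velocity, position, and the surrounding $(k,q)$ along its trajectory, and then eliminate the velocity variable to expose a linear relation between $q$ and $k$.

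First, the BA-phase equation $\partial_t v + v\,\partial_x v = A(v)$ becomes the autonomous ODE $\dot v = A(v)$ along any vehicle trajectory. Imposing continuity of vehicle speed across the boundary (each vehicle enters the BA region with the upstream equilibrium speed $V(k^+)$), this ODE integrates to $v(\tau;t) = \phi(t-\tau)$, where $\phi$ is the unique solution of $\phi'(s) = A(\phi(s))$ with $\phi(0) = V(k^+)$; hence $x(\tau;t) = x_0(\tau) + \int_0^{t-\tau}\phi(s)\,ds$ and $\partial_\tau x = x_0'(\tau) - \phi(t-\tau)$. Second, the Rankine-Hugoniot condition at the phase boundary gives the flux of vehicles entering the BA region in the boundary's moving frame, $\rho_0 := q^+ - k^+\,x_0'(\tau)$, which serves as the Lagrangian density in the entry-time parameter $\tau$. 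Third, pushing forward to Eulerian variables, $k = \rho_0/|\partial_\tau x|$ and $q = k\,\phi(t-\tau)$; eliminating $\phi(t-\tau)$ between these two relations yields $q - x_0'(\tau)\,k = q^+ - k^+\,x_0'(\tau)$, which is the equation of a straight line in the $q$-$k$ plane through $(k^+,q^+)$ with the claimed slope.

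The main obstacle I expect is the bookkeeping around what the theorem means by a trajectory starting at $x_0(t)$: the linear relation above is indexed by the entry time $\tau$ of the specific vehicle trajectory, so its slope is the boundary speed at the crossing instant and not at an arbitrary later time. For a time-varying $x_0'(\cdot)$, the statement is most naturally read in this Lagrangian sense, with each trajectory carving out its own straight line whose slope is fixed by the boundary speed at its own crossing event. Two smaller items I would need to justify are the continuity of $v$ across the phase boundary, so that $\dot v = A(v)$ can legitimately be initialised at $V(k^+)$, and the nondegeneracy $\phi(t-\tau) \neq x_0'(\tau)$ of the Lagrangian-to-Eulerian pushforward, which holds because vehicles in the BA phase move strictly faster than the typically backward-propagating phase boundary.
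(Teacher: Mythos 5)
Your proposal is correct in substance but follows a genuinely different route from the paper. The paper argues \emph{infinitesimally}: it places two virtual vehicles a time-headway $\tau$ apart on the release boundary, defines $k_\tau$ and $q_\tau$ from their spacing, computes $dq_\tau/dk_\tau=\dot q_\tau/\dot k_\tau$ along the trajectory, and shows by Taylor expansion that this derivative tends to a constant (the negative of the boundary's average slope $c$) as $\tau\to0^+$; linearity then follows by integration, with the intercept left implicit. You instead work \emph{globally} in Lagrangian (entry-time) coordinates: the fact that every vehicle crosses the boundary at the common speed $V(k^+)$ and therefore follows the single profile $\phi(t-\tau)$, combined with vehicle conservation across the moving boundary ($\rho_0=q^+-k^+x_0'$), gives closed-form expressions for $k$ and $q$ whose elimination produces the entire line at once --- including the extra information that it passes through the upstream state $(k^+,q^+)$, which the paper's differential argument does not deliver. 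Your version also makes explicit exactly where the homogeneity of the upstream state enters, and correctly isolates the two hypotheses (speed continuity at the boundary, nondegeneracy $\phi\neq x_0'$) that the paper glosses over. Both arguments are sound, and yours is arguably the cleaner and more informative of the two.

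One point you should not wave away: your line $q-q^+=x_0'(\tau)\,(k-k^+)$ has slope $+x_0'(\tau)$, whereas the theorem claims $-x_0'(t)$, so calling it \emph{the claimed slope} is not accurate as written. I believe your sign is the physically and internally consistent one: for a boundary receding at speed $r>0$ (so $x_0'=-r$), density decreases while flow increases along an accelerating trajectory, hence $dq/dk<0$, and the paper's own Corollary gives the slope as $-r=x_0'$. The paper's proof arrives at $-x_0'$ only because its spacing formula $\Delta x_\tau=c\tau+\int_{t-\tau}^{t}v(s)\,ds$ uses $c=\bigl(x_0(t)-x_0(t-\tau)\bigr)/\tau$ where the geometry of a receding boundary requires $\bigl(x_0(t-\tau)-x_0(t)\bigr)/\tau$; the compensating slip is absent from the theorem statement itself. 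So the right move is to state your slope as $x_0'(\tau)$ and flag the sign discrepancy with the theorem, rather than silently identifying the two.
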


Consider two virtual vehicles A (leader) and B (follower): vehicle A occupies the boundary point $(x,t)=(x_0(t),t)$ and travels with speed $v(t)$. Vehicle B starts one (infinitesimal) time–headway $\tau>0$ later at $(x_0(t-\tau),t-\tau)$,  with the \emph{same} initial speed $v(t-\tau)$. For this leader-follower pair define
\begin{equation}\label{eq:P1}
  \begin{aligned}
    \Delta x_\tau(t)  &:= c\,\tau + \int_{t-\tau}^{t} v(s)\,ds,
         &\quad&\text{(spacing)}\\[2pt]
    \Delta v_{H,\tau}(t) &:= v(t)-v(t-\tau),
         &\quad&\text{(headway speed-difference)}\\[2pt]
    k_\tau(t)        &:= \dfrac{1}{\Delta x_\tau(t)},
         &\quad&\text{(density)}\\[6pt]
    q_\tau(t)        &:= k_\tau(t)\,v(t),
         &\quad&\text{(flow)}
  \end{aligned}
\end{equation}

where $c:=\frac{x_0(t)-x_0(t-\tau)}{\tau}$ denotes the average slope of the release boundary between $t - \tau$ and $t$. Derivatives with respect to $t$ are written $\dot v(t)$ and $\ddot v(t)$.

\begin{lemma}
For every fixed $t$,
\begin{align}
  \Delta v_{H,\tau}(t) &= \dot v(t)\,\tau - \tfrac12\ddot v(t)\,\tau^{2}+O(\tau^{3}),
  \label{eq:L1.1}\\
  \Delta x_\tau(t)     &= [c+v(t)]\,\tau
                          -\tfrac12\dot v(t)\,\tau^{2}
                          +O(\tau^{3}),
  \label{eq:L1.2}\\
  k_\tau(t)            &= \dfrac{1}{\tau\,[c+v(t)]}+O(1).          \label{eq:L1.3}
\end{align}
\end{lemma}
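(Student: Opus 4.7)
The plan is to derive all three estimates by Taylor-expanding the speed profile $v(\cdot)$ around the time $t$, and then, for \eqref{eq:L1.3}, inverting the resulting expression for $\Delta x_\tau(t)$ via a geometric series. I assume $v$ is sufficiently smooth near $t$ (as inherited from $\dot v = A(v)$ with $A \in C^{1}$; an extra degree of smoothness on $A$ makes the $O(\tau^{3})$ remainders fully rigorous).

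For \eqref{eq:L1.1}, I would apply Taylor's theorem to $v(t-\tau)$ directly,
$v(t-\tau) = v(t) - \dot v(t)\,\tau + \tfrac12 \ddot v(t)\,\tau^{2} + O(\tau^{3}),$
and subtract from $v(t)$. For \eqref{eq:L1.2}, I would substitute $s = t + u$ in $\int_{t-\tau}^{t} v(s)\,ds$ to obtain $\int_{-\tau}^{0} v(t+u)\,du$, Taylor-expand the integrand in $u$ about $0$ to second order, and integrate term by term. The odd powers of $u$ yield nonzero contributions over $[-\tau,0]$, producing $v(t)\,\tau - \tfrac12 \dot v(t)\,\tau^{2} + O(\tau^{3})$; adding the drift contribution $c\tau$ from the moving release boundary gives \eqref{eq:L1.2}.

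For \eqref{eq:L1.3}, I would factor $\tau[c+v(t)]$ out of the previous expansion,
$\Delta x_\tau(t) = \tau[c+v(t)]\Bigl(1 - \tfrac{\dot v(t)}{2[c+v(t)]}\,\tau + O(\tau^{2})\Bigr),$
and invert using $(1-y)^{-1} = 1 + O(y)$ for small $y$. This yields $k_\tau(t) = 1/(\tau[c+v(t)]) + O(1)$, as claimed. The only genuine subtlety is ensuring $c + v(t) > 0$, so that the reciprocal is well defined and the geometric series converges for all sufficiently small $\tau > 0$. This is automatic in the set-up of the theorem: $v(t) > 0$ along the trajectory of the follower, and $c = [x_0(t)-x_0(t-\tau)]/\tau$ is bounded, tending to the admissible phase-boundary speed $x_0'(t) = r$ as $\tau \to 0$. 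Granted this positivity check, the remainder of the argument reduces to routine Taylor arithmetic plus one geometric-series inversion.
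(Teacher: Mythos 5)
Your proposal is correct and follows essentially the same route as the paper's proof: a direct Taylor expansion of $v(t-\tau)$ for \eqref{eq:L1.1}, Taylor expansion under the integral (plus the $c\tau$ drift term) for \eqref{eq:L1.2}, and factoring out $\tau[c+v(t)]$ followed by a geometric-series inversion for \eqref{eq:L1.3}. Your added remark on checking $c+v(t)>0$ is a sensible point the paper leaves implicit, but it does not change the argument.
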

\begin{proof}
We approximate $v(t-\tau)$ through a Taylor series expansion around $t$:
\[
  v(t-\tau)=v(t)-\tau\dot v(t)+\tfrac{\tau^{2}}{2}\ddot v(t)
            -\tfrac{\tau^{3}}{6}v^{(3)}(\xi_{1}),
            \qquad \xi_{1}\in(t-\tau,t).
\]
Hence
\[
  \Delta v_{H,\tau}(t)=v(t)-v(t-\tau)
            =\tau\dot v(t)-\tfrac{\tau^{2}}{2}\ddot v(t)+O(\tau^{3}),
\]
which matches~\eqref{eq:L1.1}.

To obtain the expression for the spacing $\Delta x_\tau(t)$, we apply Taylor’s theorem under the integral:
\begin{align*}
\int_{t-\tau}^{t}\!v(s)\,ds
  &= \int_{0}^{\tau}\!v(t-\sigma)\,d\sigma \\
  &= \tau v(t) - \tfrac{\tau^{2}}{2}\dot v(t)
     + \tfrac{\tau^{3}}{6}\ddot v(t)
     - \tfrac{\tau^{4}}{24}v^{(3)}(\xi_{2}),
     \quad \xi_{2} \in (t-\tau, t).
\end{align*}
By adding the spatial headway at the start time of B, $c\tau$, we obtain~\eqref{eq:L1.2}.

Write the spacing in the separated form
\begin{align*}
\Delta x_\tau(t) &= (c + v(t))\,\tau \left[ 1 - \frac{\dot{v}(t)}{2(c + v(t))} \tau + \frac{\ddot{v}(t)}{6(c + v(t))} \tau^2 + O(\tau^3) \right] \\
                 &= (c + v(t))\,\tau \left[ 1 - \varepsilon_\tau(t) \right],
\end{align*}
where $\displaystyle
      \varepsilon_{\tau}=\tfrac{\tau}{2}\frac{\dot v(t)}{c+v(t)}+O(\tau^{2})$.
Using the geometric-series identity
$(1-\varepsilon)^{-1}=1+\varepsilon+\varepsilon^{2}+O(\varepsilon^{3})$
gives
\[
  \frac{1}{\Delta x_\tau(t)}
    =\frac{1}{\tau\,[c+v(t)]}\,
      \bigl(1+\varepsilon_{\tau}+O(\tau^{2})\bigr)
    =\frac{1}{\tau\,[c+v(t)]}
      +\frac{\dot v(t)}{2\,[c+v(t)]^{2}}
      +O(\tau).
\]
This establishes~\eqref{eq:L1.3} and completes the lemma.
\end{proof}

\begin{lemma}
For every $\tau>0$,
\begin{equation}\label{eq:L2}
  \frac{d q_\tau}{d k_\tau}(t)
  \;=\;
  v(t) \;-\;
  \frac{\dot v(t)}{k_\tau(t)\,\Delta v_{H,\tau}(t)}.
\end{equation}
\end{lemma}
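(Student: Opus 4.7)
My approach is to treat $t\mapsto\bigl(k_\tau(t),q_\tau(t)\bigr)$ as a smooth parametric curve at fixed $\tau$ and compute its slope via the chain rule $\frac{dq_\tau}{dk_\tau}=\dot q_\tau(t)/\dot k_\tau(t)$. All the ingredients come directly from the definitions in \eqref{eq:P1}; no appeal to the asymptotic expansions of Lemma~1 is needed, since \eqref{eq:L2} is an exact (not asymptotic) identity.

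The concrete steps are: (i) differentiate the product $q_\tau=k_\tau v$ in $t$ to obtain $\dot q_\tau=\dot k_\tau\,v+k_\tau\,\dot v$, giving $\frac{dq_\tau}{dk_\tau}=v+\frac{k_\tau\dot v}{\dot k_\tau}$; (ii) use $k_\tau=1/\Delta x_\tau$ and the chain rule to get $\dot k_\tau=-k_\tau^{2}\,\dot{(\Delta x_\tau)}$, which turns the target identity into the single equality $\dot{(\Delta x_\tau)}=\Delta v_{H,\tau}$; (iii) differentiate $\Delta x_\tau(t)=\bigl[x_0(t)-x_0(t-\tau)\bigr]+\int_{t-\tau}^{t}v(s)\,ds$ termwise, so that Leibniz' rule supplies $v(t)-v(t-\tau)=\Delta v_{H,\tau}$ from the integral and a contribution $x_0'(t)-x_0'(t-\tau)$ from the boundary displacement. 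Substituting $\dot k_\tau=-k_\tau^{2}\Delta v_{H,\tau}$ back into the expression obtained in step~(i) then produces \eqref{eq:L2} after trivial algebra.

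The main obstacle is disposing of the residual boundary-motion term $x_0'(t)-x_0'(t-\tau)$, which must vanish for the identity to be exact. This is where the hypothesis preceding Theorem~\ref{theorem:qklinearity} does the work: because the upstream state is the homogeneous congestion $(k^{+},Q(k^{+}))$ and the characteristic speeds in the BA phase coincide with the vehicle speed, the Rankine--Hugoniot condition at the EQ/BA interface pins the phase-boundary speed $r=x_0'(\cdot)$ to a constant, so $x_0'(t)-x_0'(t-\tau)=0$ and the parameter $c$ in \eqref{eq:P1} is a genuine constant along the trajectory. Once this constancy is in place, $\dot{(\Delta x_\tau)}=\Delta v_{H,\tau}$ holds exactly and the remainder of the derivation is routine calculus; the same argument will ultimately be the reason the $q$--$k$ trace of Theorem~\ref{theorem:qklinearity} is a straight line rather than merely a smooth curve.
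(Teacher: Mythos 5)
Your proof is correct and takes essentially the same route as the paper's: the chain rule $dq_\tau/dk_\tau=\dot q_\tau/\dot k_\tau$, the product rule on $q_\tau=k_\tau v$, and $\dot k_\tau=-k_\tau^{2}\,\tfrac{d}{dt}\Delta x_\tau$. The only difference is that you explicitly justify dropping the boundary-motion term $x_0'(t)-x_0'(t-\tau)$ via the constancy of the release-boundary speed under homogeneous upstream congestion, a step the paper performs silently by writing $\tfrac{d}{dt}\Delta x_\tau(t)=v(t)-v(t-\tau)$ outright.
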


\begin{proof}
By definition, \( k_\tau(t) = 1 / \Delta x_\tau(t) \) with \( \Delta x_\tau(t) \) given in~\eqref{eq:P1}. Because \( \dot{\Delta x}_\tau(t) = v(t) - v(t - \tau) =: \Delta v_\tau(t) \),
    \begin{equation}
      \dot{k}_\tau(t)
        = -\frac{\dot{\Delta x}_\tau(t)}{(\Delta x_\tau(t))^{2}}
        = -k_\tau(t)^{2} \cdot \Delta v_\tau(t).
      \label{eq:kdot}
    \end{equation}
Differentiating \( q_\tau = k_\tau \, v \) with respect to \( t \) yields

    \begin{equation}
      \dot{q}_\tau(t)
        = \dot{k}_\tau(t)\,v(t)\;+\;k_\tau(t)\,\dot{v}(t).
      \label{eq:qdot}
    \end{equation}
The trajectory \( t \mapsto (k_\tau(t), q_\tau(t)) \) defines a parametrized curve in the \( k \)–\( q \) plane. By the chain rule, its derivative with respect to \( k \) is given by
    \begin{equation}
      \frac{dq_\tau}{dk_\tau}
        = \frac{\dot{q}_\tau(t)}{\dot{k}_\tau(t)}
        \quad \text{provided } \dot{k}_\tau(t) \neq 0.
      \label{eq:chain}
    \end{equation}
Substituting \eqref{eq:kdot} and \eqref{eq:qdot} into \eqref{eq:chain}:
    \[
  \frac{dq_\tau}{dk_\tau}
    = \frac{[-k_\tau(t)^2 \cdot \Delta v_\tau(t)] \cdot v(t)
             + k_\tau(t) \cdot \dot{v}(t)}
           { -k_\tau(t)^2 \cdot \Delta v_\tau(t)}
    = v(t) - \frac{\dot{v}(t)}{k_\tau(t) \cdot \Delta v_\tau(t)},
\]
    which is exactly formula~\eqref{eq:L2}.
\end{proof}

\begin{proof}[Proof of theorem \ref{theorem:qklinearity}]
from equations \ref{eq:L1.1}–\ref{eq:L1.3} we establish that \[k_\tau\,\Delta v_{H,\tau} =
    k_\tau\,\Delta v_{H,\tau}
        = \frac{\dot v(t)}{c+v(t)} + O(\tau)
        \quad\text{as }\tau\to0^{+}.
  \]
  Substituting this into~\eqref{eq:L2} gives
  \[
    \frac{d q_\tau}{d k_\tau}(t)
      = v(t)\;-\;\bigl[c+v(t)\bigr] + O(\tau)
      = -c + O(\tau).
  \]
  Taking $\tau\to0^{+}$ proves the claimed limit. Integrating with respect to $k$ yields the straight line announced in the statement, with an intercept determined by the $t$-dependent limit of $q_\tau + c\,k_\tau$.
\end{proof}

For transitions from the equilibrium phase to the bounded-acceleration phase as desribed in theorem \ref{theorem:qklinearity}, that is, when vehicles accelerate out of a congested region, \cite{Lebacque2003} defines the following 
additional boundary condition: The acceleration wave propagates with a reaction delay \( \theta \) in the opposite direction of travel from one vehicle to the preceding one. Since the distance between two vehicles is proportional to the inverse 
traffic density \( \frac{1}{k} \), the formula 
\[
\rho(k) = V(k) - \frac{q_0}{k}
\]
results for the propagation speed of the acceleration wave. Following the recommendation of \cite{Lebacque2002}, we set 
\( q_0 = Q_{\max} \), so that \( \rho(k) \leq 0 \) and consequently \( d\xi/dt \leq 0 \) always holds.

To analyze the effects of changes in the variable speed limit on traffic flow behavior, it is also helpful to characterize the behavior of $q(t)$ at the VSL location $x_0$, starting from the release boundary. The following theorem derives a formula for $q(t)$ under these conditions:

\begin{theorem}
\label{theroem:statobs}
Consider a phase transition from LWR equilibrium to BA phase with boundary trajectory \( x_b(t) \). Fix a point \( (x_0, t_0) \) on this boundary. Let \( q_1 \) and \( v_1 \) denote the upstream equilibrium flow and speed, respectively,\( V_e \) the final speed in the BA phase, and \( D_e \) the distance required for a vehicle to accelerate from \( v_1 \) to \( V_e \).
 Define
\[
t_{+} := t_0 + \frac{D_e}{r}, \qquad
q_0 := \frac{q_1}{1 + r / V_e}.
\]
Then the flow observed by a stationary detector at \( x_0 \) is given by
\[
q(x_0, t) =
\begin{cases}
\displaystyle \frac{q_1}{1 + r / v(t)}, & t_0 \le t < t_{+}, \\[10pt]
q_0, & t \ge t_{+}.
\end{cases}
\]
\end{theorem}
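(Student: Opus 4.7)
The plan is to combine two ingredients: the $(k,q)$-linearity established by Theorem~\ref{theorem:qklinearity} along any released vehicle trajectory, and the stationarity of the BA profile in the frame moving with the acceleration wave $x_b(\cdot)$. First I would note that every vehicle released from $x_b(\cdot)$ obeys the same scalar ODE $\dot v = A(v)$ with common initial condition $v=v_1$, so its speed is a fixed function $\phi(u)$ of its age $u=t-s$. Consequently, both the velocity and density fields depend only on the boundary-comoving coordinate $\xi = x - x_b(t)$, and the whole BA region is steady in that frame.

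Next I would invoke Theorem~\ref{theorem:qklinearity} at the release boundary: the trace $(k(t),q(t))$ along a released trajectory lies on a single straight line of slope $-r$ anchored at the upstream equilibrium point $(k_1,q_1)$. Eliminating $k$ via $q=vk$ yields $q$ as a rational function of $v$; under the release-speed convention $r = Q_{\max}/k_1 - v_1$ inherited from the $\rho(k)$-construction that precedes the theorem, the combination $q_1+rk_1$ collapses to a single constant, leaving exactly the form $q = q_1/(1+r/v)$ stated in the theorem.

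To convert this velocity parametrization into a time parametrization at the fixed detector $x_0$, I would identify which vehicle is at $x_0$ at time $t$: it is the one released at the instant $s(t)$ implicitly determined by $x_0 = x_b(s) + \int_0^{t-s}\phi(u)\,du$. A short implicit-function argument, using the strict inequality $\phi - x_b' > 0$, shows that $s(\cdot)$ is smooth and monotone on $[t_0,t_+)$, and the velocity at $x_0$ is $v(t) := \phi(t-s(t))$; substituting this into the line equation delivers the first branch of the formula. The threshold $t_+$ is fixed by requiring that the vehicle sampled at $x_0$ has just exhausted the acceleration distance $D_e$, i.e., $x_b(s(t_+)) = x_0 - D_e$, giving the stated $t_+ = t_0 + D_e/r$. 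For $t \ge t_+$ every sampled vehicle travels at the terminal speed $V_e$, so $v(t) \equiv V_e$ and the flow locks at the constant $q_0 = q_1/(1+r/V_e)$.

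The step I expect to be most delicate is the passage from the Lagrangian trajectory picture to the Eulerian observer, namely the implicit construction of $s(t)$ together with the matching of the two branches at $t = t_+$. One has to rule out trajectory crossings so that $t \mapsto v(t)$ is single-valued, and to verify that the one-sided limits of $q(x_0,\cdot)$ agree at $t_+$ so that the piecewise formula is continuous. Both should follow from the strict positivity of $\phi - x_b'$ and the monotonicity of $\phi$ implied by $A > 0$, but this is precisely the point where the informal \emph{release-wave} picture must be promoted to a rigorous argument.
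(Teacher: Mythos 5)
Your route is genuinely different from the paper's: the paper never invokes Theorem~\ref{theorem:qklinearity} here. Its proof is an elementary two-vehicle headway computation: consecutive vehicles released a headway $\Delta t_0 = 1/q_1$ apart must cover distances to $x_0$ differing by $r\Delta t_0$; translation invariance of released trajectories makes the travel time a function $T(D)$ with $T'(D)=1/v(D)$; a first-order Taylor expansion converts the release headway into the arrival headway $\Delta t_0\bigl(1+rT'(D)\bigr)$, whose reciprocal is the stated flow. Your comoving-frame stationarity and the implicit construction of the release instant $s(t)$ are a heavier but legitimate way to make the same Lagrangian-to-Eulerian passage, and your treatment of $t_+$ matches the paper's.

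The problem is your second step. A line of slope $-r$ anchored at the upstream state $(k_1,q_1)$ reads $q=(q_1+rk_1)-rk$; substituting $k=q/v$ gives
\[
q=\frac{q_1+rk_1}{1+r/v},
\]
and with your convention $r=Q_{\max}/k_1-v_1$ the numerator $q_1+rk_1$ collapses to $Q_{\max}$, not to $q_1$. So your construction proves $q=Q_{\max}/(1+r/v)$, which coincides with the theorem only if $rk_1=0$. The theorem's formula corresponds to the parallel line with $k=0$ intercept $q_1$ (exactly what the corollary following the theorem recovers), and that line does \emph{not} pass through $(k_1,q_1)$. The paper obtains the numerator $q_1$ by identifying the release rate across the moving boundary with the upstream flow, $q_1=1/\Delta t_0$; your continuity-at-the-boundary picture instead enforces the relative flow $q_1+rk_1$ as the release rate and therefore lands on a different constant. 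You must either adopt the paper's identification (in which case you cannot anchor the line at $(k_1,q_1)$) or accept that your argument derives a formula with a different numerator. As written, the assertion that the combination ``collapses \ldots\ leaving exactly the form $q=q_1/(1+r/v)$'' is an algebraic error, and it sits at the crux of the proof.
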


\begin{proof}
Consider two consecutive vehicles departing from the boundary at times \( t_d \geq t_0 \) and \( t_d + \Delta t_0 \), where \( \Delta t_0 \) represents the release headway. Let \( r \) denote the average speed of the boundary trajectory during this interval.

The road distances these vehicles must traverse to reach \( x_0 \) are
\[
D(t_d) = r(t_d - t_0), \quad D(t_d + \Delta t_0) = D(t_d) + r\Delta t_0.
\]

Since vehicle trajectories departing from the boundary are translation-invariant, the travel time to reach distance \( D \) depends only on \( D \) itself. Denote this travel time function by \( T(D) \), where \( T'(D) = \frac{1}{v(D)} \) by definition of speed.

The arrival times at \( x_0 \) are
\begin{align}
t_{\text{arr},1} &= t_d + T(D(t_d)), \\
t_{\text{arr},2} &= t_d + \Delta t_0 + T(D(t_d) + r\Delta t_0).
\end{align}

Applying Taylor expansion to \( T \) around \( D(t_d) \):
\[
T(D(t_d) + r\Delta t_0) = T(D(t_d)) + T'(D(t_d)) \cdot r\Delta t_0 + O(\Delta t_0^2).
\]

The arrival headway is therefore
\[
\Delta T(t_d) = t_{\text{arr},2} - t_{\text{arr},1} = \Delta t_0(1 + rT'(D(t_d))) + O(\Delta t_0^2).
\]

Since the boundary releases vehicles at rate \( q_1 = 1/\Delta t_0 \), the instantaneous flow at the detector is
\begin{align}
q(t_d) &= \frac{1}{\Delta T(t_d)} = \frac{1}{\Delta t_0(1 + rT'(D(t_d)) + O(\Delta t_0))} \\
&= \frac{q_1}{1 + rT'(D(t_d)) + O(\Delta t_0)} = \frac{q_1}{1 + r/v(D(t_d))},
\end{align}
where the final equality uses \( T'(D) = 1/v(D) \) and neglects higher-order terms as \( \Delta t_0 \to 0 \).
\end{proof}

The trajectory in the q-k phase plane observed from the stationary position $x_0$ follows from the following corollary:

\begin{corollary}
Let an acceleration wave pass over a fixed detector located at position \(x_0\). Then, the trajectory traced in the \((k, q)\)-plane is a straight line with slope \( -r \).
\end{corollary}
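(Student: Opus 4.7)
The plan is to derive the corollary as a direct algebraic consequence of Theorem~\ref{theroem:statobs}, using the fundamental relation $q = k v$ to eliminate the time parameter and obtain an explicit linear relation between $k$ and $q$ along the passage of the acceleration wave.

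First, I would restrict attention to the interval $t_0 \le t < t_+$ during which the acceleration wave actively passes over $x_0$ (outside this interval, the state is a single point in the $(k,q)$ plane, namely $(q_0/V_e,\,q_0)$, so there is nothing to show). On this interval, Theorem~\ref{theroem:statobs} gives
\[
  q(x_0,t) \;=\; \frac{q_1\,v(t)}{v(t) + r}.
\]
Combining this with the fundamental identity $k = q/v$ yields the companion expression
\[
  k(x_0,t) \;=\; \frac{q_1}{v(t) + r}.
\]

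Next, I would eliminate the speed parameter $v(t)$. From the $k$-formula one obtains $v(t) = q_1/k(x_0,t) - r$, and substituting into $q = k v$ gives
\[
  q(x_0,t) \;=\; k(x_0,t)\!\left(\frac{q_1}{k(x_0,t)} - r\right) \;=\; q_1 - r\,k(x_0,t).
\]
This is precisely the equation of a straight line in the $(k,q)$-plane with slope $-r$ and $q$-intercept $q_1$, which is the content of the corollary.

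There is no real obstacle in the argument; the only point requiring a bit of care is to verify that the map $t \mapsto v(t)$ is (strictly) monotone on $[t_0,t_+)$, so that $(k(x_0,t), q(x_0,t))$ actually traces out a nondegenerate segment of this line rather than collapsing to a single point. This monotonicity is guaranteed by the BA-phase dynamics in \eqref{equation:ba}, since $A(v) > 0$ on the relevant speed range forces $\dot v > 0$ along every vehicle trajectory emanating from the release boundary, and hence also along the stationary observation at $x_0$ as the successive cohorts of vehicles arrive with increasing terminal speeds until $v = V_e$ is reached at $t = t_+$.
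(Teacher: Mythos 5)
Your proposal is correct and follows essentially the same route as the paper's own proof: both start from the flow formula of Theorem~\ref{theroem:statobs}, use the identity $k=q/v$ to eliminate the speed, and arrive at $q=q_1-r\,k$ (the paper merely solves for $v$ in terms of $q$ first rather than in terms of $k$, a trivially equivalent manipulation). Your added remark on the monotonicity of $v(t)$, ensuring the trajectory traces a nondegenerate segment, is a small refinement the paper omits but does not constitute a different argument.
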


\begin{proof}
We begin with the relation
\[
q = \frac{q_1}{1 + r/v},
\]
and solve for the velocity \(v\):
\[
v = \frac{r\,q}{q_1 - q}.
\]
Substituting this expression for \(v\) into the identity \(k = q/v\), we obtain
\[
k = \frac{q_1 - q}{r}.
\]
Solving for \(q\) yields
\[
q = q_1 - r\,k,
\]
which holds for \(q \in [q_0, q_1]\). This completes the proof.
\end{proof}

\section{Infrastructure, Demand, and Emissions}
\label{section:infra_em}
The geometric properties of the analyzed corridor are presented schematically in Figure \ref{figure:schematic}.

\begin{figure}[H]
    \centering
    \includegraphics[width=\linewidth]{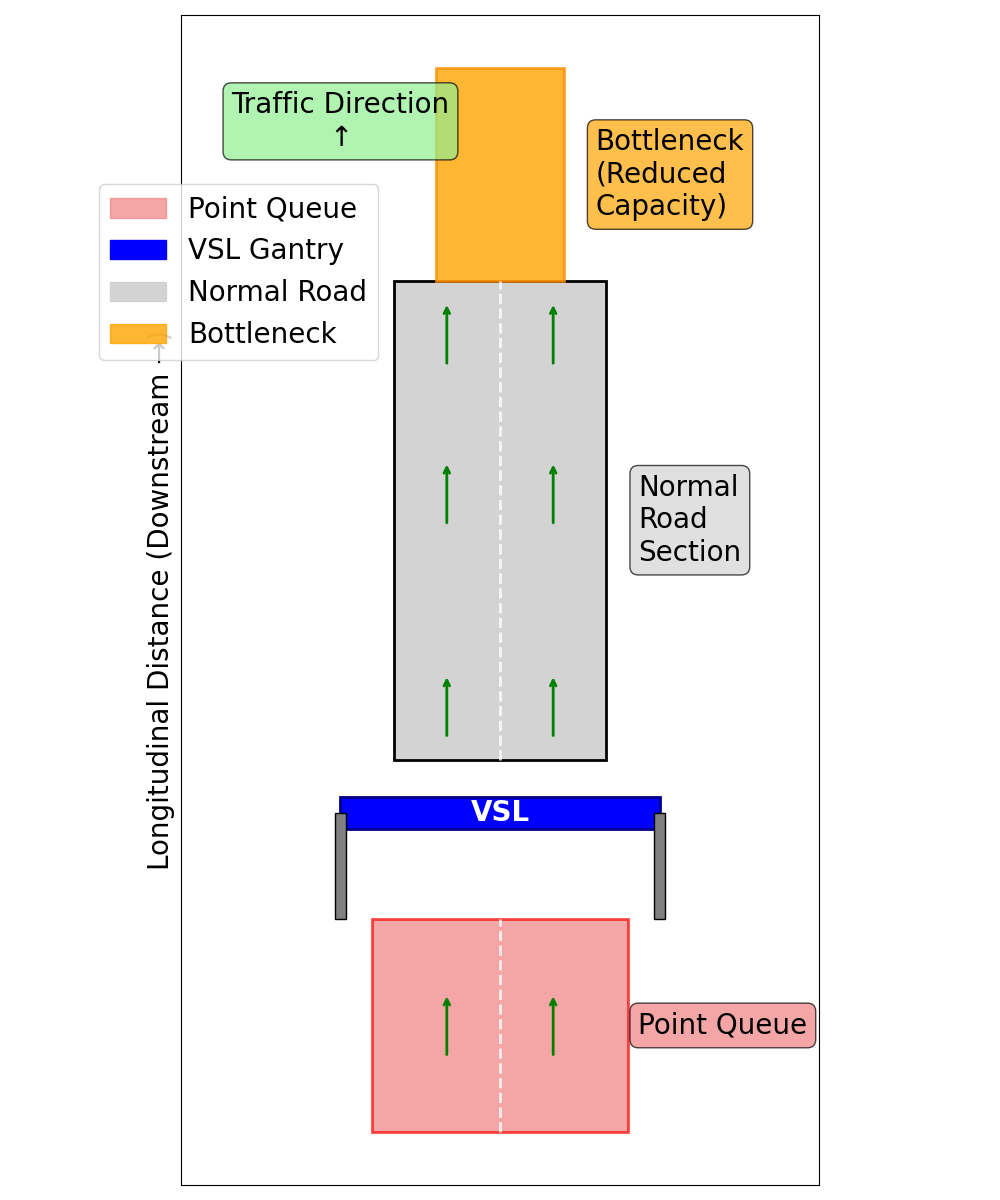}
    \caption{Schematic illustration of the corridor and point queue.}
    \label{figure:schematic}
\end{figure}

We consider the VSL to be located at the upstream boundary of the corridor, i.e., at position \( x = 0 \). The inflow at this boundary follows a point queue mechanism (\cite{Jin2015c}): if the available supply is insufficient for immediate entry, vehicles are held in an abstract upstream queue. Let \( \ell(t) \) denote the length of this point queue at time \( t \). Its dynamics are described by the conservation equation
\begin{equation}
\label{eq:pq_conservation}
\frac{d\ell(t)}{dt} = D(t) - q(0,t),
\end{equation}
where the actual inflow \( q(0,t) \) is determined by the state-dependent rule
\begin{equation}
\label{eq:pq_demsup}
q(0,t) =
\begin{cases}
s(k(0,t)), & \text{if } \ell(t) > 0, \\
\min\left\{ s(k(0,t)), D(t) \right\}, & \text{if } \ell(t) = 0.
\end{cases}
\end{equation}

For passenger cars with petrol engines compliant with the Euro 5 emission standard, the COPERT V Tier 3 model expresses hot exhaust emissions of CO, NO$_x$, and HC (VOC) as speed-dependent functions of the form:
\[
E(v) = \frac{\alpha v^2 + \beta v + \gamma + \delta / v}{\varepsilon v^2 + \zeta v + \eta},
\]
where \(v\) is the average vehicle speed in km/h, and the parameters \(\alpha, \beta, \gamma, \delta, \varepsilon, \zeta, \eta\) are pollutant-specific coefficients obtained from the EMEP/EEA Guidebook Appendix 4.

The fitted coefficients for Euro 5 petrol passenger cars are:

\begin{align}
E_{\text{CO}}(v) &= \frac{0.000445 \, v^2 - 0.102076 \, v + 6.876928 + \frac{10.383849}{v}}{0.001621 \, v^2 - 0.437563 \, v + 30.337333}, \\
E_{\text{NO}_x}(v) &= \frac{-0.000315 \, v^2 + 0.103057 \, v + 0.239057 - \frac{0.339279}{v}}{0.034536 \, v^2 + 1.986013 \, v + 1.263763}, \\
E_{\text{HC}}(v) &= \frac{0.000004 \, v^2 - 0.000707 \, v + 0.045249 + \frac{0.173074}{v}}{0.000070 \, v^2 - 0.047538 \, v + 6.212053}.
\end{align}

All emission factors \(E(v)\) are in units of grams per kilometer [g/km]. These expressions are valid across a broad range of speeds typically encountered in real-world traffic (\(v \in [10, 130]\,\text{km/h}\)), including urban streets and peripheral freeways. 

\section{Solution Of The Control Problem}
\label{section:solution}
In this section, we solve the control problem resulting from the system behavior derived in section \ref{section:bamodel}. We assume a triangular shape of the fundamental diagram \( q(k) \):
\[
q(k) = \min\{v_f \cdot k,\, w \cdot (k_j - k)\},
\]
where \( v_f \) denotes the free-flow speed and \( w \) the wave speed under congested conditions. Furthermore, a constant maximum acceleration \( A_0 \) during the BA phase is assumed.
In the optimization of the VSL trajectory, the running cost per vehicle, i.e., the emissions as a function of travel time~\(\tau\), is given by
\begin{equation}
\label{eq:runningcost}
C(\tau) = \mu_1 \tau + \mu_2 \left[ \lambda_1 E_{\mathrm{CO}}\left(\frac{l}{\tau}\right) + \lambda_2 E_{\mathrm{NO}_x}\left(\frac{l}{\tau}\right) + \lambda_3 E_{\mathrm{HC}}\left(\frac{l}{\tau}\right) \right],
\end{equation}
where \(\mu_1, \mu_2\) and \(\lambda_1, \lambda_2, \lambda_3\) are weighting parameters. The normalization conditions \(\mu_1 + \mu_2 = 1\) and \(\lambda_1 + \lambda_2 + \lambda_3 = 1\) apply. 

Steep VSL increases can generate shock waves when vehicles accelerating toward the current VSL overtake slower preceding vehicles.  To prevent such collisions, we introduce a condition whereby a vehicle starting at time $t+dt$ always arrives later than one starting at time $t$. This yields a Bardos-Leroux-Nédélec (BLN) boundary condition:
\[
    \left[t + dt + \tau(t + dt)\right] - \left[t + \tau(t)\right] > 0 \quad \Longrightarrow \quad 1 + \frac{d\tau}{dt} \geq 0.
\]
Equivalently:
\begin{equation}
\label{eq:BLNraw}
v'_{\text{VSL}}(t) \leq \frac{\left( 1 - \frac{v_{\text{VSL}}(t) - v(t)}{A_0 \, v_{\text{VSL}}(t)} v'(t) \right)}{\frac{v_{\text{VSL}}^2(t) - v^2(t) - 2A_0 l}{2A_0 \, v_{\text{VSL}}^2(t)}}.
\end{equation}
For analytical convenience, we employ the upper bound:
\begin{equation}
\label{eq:BLNapprox}
v'_{\text{VSL}}(t) \leq \frac{2A_0 \, v_{\text{VSL}}(t)}{2A_0 l - \left( v_{\text{VSL}}^2(t) - v^2(t) \right)}.
\end{equation}

For an increase in the speed limit from \( v_c \) to \( v_c^+ \), the dynamic equations of vehicle kinematics in the BA phase are:
\[
\frac{dv}{dt} = A_0, \qquad v(0) = v_c.
\]
The backward speed of acceleration wave is \( r \). Using the chain rule, the transformation from a time-dependent to a space-dependent ordinary differential equation follows:
\[
\frac{dv}{dx} = \frac{dv/dt}{dx/dt} = \frac{A_0}{v}.
\]
Separation of variables leads to:
\[
v \, dv = A_0 \, dx.
\]
Integration of both sides yields:
\[
\int_{v_c}^{v} v' \, dv' = \int_0^{x} A_0 \, dx' \quad \Rightarrow \quad \frac{v^2 - v_c^2}{2} = A_0 x.
\]

A vehicle leaving the release boundary at time \( t \) covers the distance \( D = r(t - t_0) \) to the VSL position. Thus:
\[
v(D) = \sqrt{v_c^2 + 2A_0 D} = \sqrt{v_c^2 + 2A_0 r(t - t_0)}.
\]

Substituting into the formula derived from theorem \ref{theroem:statobs} yields:
\[
q(t) =
\begin{cases}
\displaystyle \frac{q_0}{1 + \dfrac{r}{\sqrt{v_c^2 + 2A_0 r(t - t_0)}}}, & \text{for } t_0 \le t < t_{+}, \\[10pt]
q_1, & \text{for } t \ge t_{+},
\end{cases}
\]
where $q_0=q(x_0,t_0)$ and $q_1$ denotes the downstream equilibrium flow value derived from theorem \ref{theorem:qklinearity}. Figure \ref{figure:transitions} demonstrates the change in equilibrium flow at the VSL position during a stepwise increase in the speed limit from 50 km / h to 85 km / h to the free flow speed of 120 km/h. Although the same boundary conditions as in \cite{Lebacque2003} are used for the propagation of acceleration waves, the present model can also exhibit such waves between two uncongested phases, as the diagram shows.

\begin{figure}[H]
  \centering
  \includegraphics[width=0.6\textwidth]{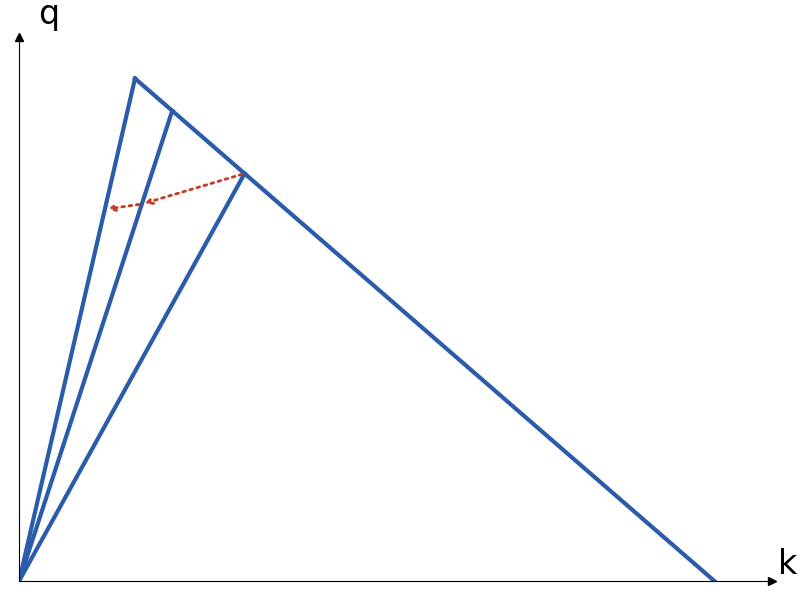}
  \caption{Transition of equilibrium states with twofold speed increase. Fundamental diagram (blue) and acceleration-induced phase transitions (red).}
  \label{figure:transitions}
\end{figure}

We study a single freeway segment consisting, in the direction of traffic flow, of (i) a point queue of length $\,l_q(t)$, followed by (ii) a VSL sign located at $x=0$, and (iii) a free-flow section of fixed length $L$ that starts at the sign and ends at the downstream exit of the segment. Vehicles join the queue with exogenous demand \( r(t) \) and leave the segment at flow rate \( f_\mathrm{exit}(t) \). The independent variable \( t \) denotes the departure time; the running cost is accumulated when a vehicle exits the segment. The posted limit is the state
\[
v(t) = v_\mathrm{VSL}(t) > 0, \qquad \dot v(t) = a(t) \in \mathcal{U}(t) \subset \mathbb{R} \quad \text{(control)},
\]
where \( a(t) \) is the rate of change. Upward changes are restricted by the path constraint \ref{eq:BLNapprox}, while deceleration is assumed
instantaneous, so \( a(t) \) is unbounded below. The traffic state is characterized by four variables:
\begin{itemize}
\item \( l_q(t) \) – the queue length upstream of the VSL sign in vehicles.
\item \( x_r(t) \le 0 \) – the position of the \textit{release boundary}: when
\( v(t) \) increases, drivers start to accelerate upstream of the
sign; \( x_r(t) \) moves backward with speed determined by the
\( q \)--\( k \) slope below.
\item \( (k_0(t), q_0(t)) \) – the density and flow precisely at the VSL sign. Between two limit changes they travel along the affine line
\( q_0 = q^\mathrm{old} - r k_0 \) until they re‑intersect the
triangular fundamental diagram (FD).
\item \( w_q(t) \) – \textit{virtual waiting time}: the time that the
vehicle currently at the head of queue (HoQ) has spent in the
system up to \( t \).
\end{itemize}

Define the FD branch governed by the limit as
\[
q_\mathrm{cap}(v) = k_j\,\frac{v w}{v + w}, \qquad
\phi(v) = q_\mathrm{cap}(v),
\]
where \( w \) and \( k_j \) are standard triangular-diagram parameters.


\begin{equation}\label{eq:OC-full}
\begin{aligned}
\min_{\,v(\cdot),\,a(\cdot)}\quad
    & J = \int_{0}^{T_f} q_0(t)\,
           C\!\bigl(w_q(t)+\tau_f(t)\bigr)\,dt \\[1em]
\text{s.t.}\quad
& \dot l_q(t) = r(t) - q_0(t), \\ 
& l_q(0) = 0, \\[0.5em]
& \dot w_q(t) = 1 - \frac{q_0(t)}{r(t - w_q(t))}, \\
& w_q(0) = 0, \\[1em]
& \text{\bf Acceleration phase} \quad (t_0 \le t < t_{+}): \\[0.3em]
& \quad \dot q_0(t) =
    \frac{A_0\,q_s\,r^{2}}
         {\left[r + \sqrt{v_c^{2} + 2A_0 r(t - t_0)}\right]^{2}
          \sqrt{v_c^{2} + 2A_0 r(t - t_0)}}, \\[0.5em]
& \quad \dot k_0(t) = -\frac{1}{r}\,\dot q_0(t), \\[1em]
& \text{\bf Equilibrium phase} \quad (t \ge t_{+}): \\[0.3em]
& \quad \dot q_0(t) = 0, 
   \quad \dot k_0(t) = 0, \\[1em]
& a(t) = \dot v(t), 
   \quad a(t) \le 
   h\bigl(v(t)\bigr) := 
   \frac{2A_0\,v(t)^{2}}{2A_0 L - [v(t)^{2} - u_0(t)^{2}]}, \\[0.5em]
& v_{\min} \le v(t) \le v_{\max}, \\[1em]
& u_0(t) = \frac{q_0(t)}{k_0(t)}, \\[0.5em]
& \tau_f(t) =
  \begin{cases}
    \displaystyle
    \frac{v(t) - u_0(t)}{A_0}
    + \frac{L - \dfrac{v(t)^2 - u_0(t)^2}{2A_0}}{v(t)},
    & \text{if } v(t) > u_0(t), \\[1em]
    \displaystyle
    \frac{L}{v(t)}, & \text{if } v(t) \le u_0(t).
  \end{cases}
\end{aligned}
\end{equation}

Here,  \(A_0,L,v_c,q_s,r,t_0,t_{+}\) are additional constants from the triangular fundamental diagram and the most recent speed-increase episode.

Equation \eqref{eq:OC-full} contains \emph{all} dynamic relations needed for optimisation; the cost integrand uses the composite travel time \(\tau_f+w_q\), where \(\tau_f\) is computed with finite acceleration whenever \(v(t)\) exceeds the current traffic speed \(u_0(t)\).

Initial data \( l_q(0) = x_r(0) = 0 \) and \( (k_0(0), q_0(0)) \) on the FD close the forward problem. The penalty per vehicle is \( C(\tau) \), Hence
\begin{equation}\label{eq:J}
J[v(\cdot)] = \int_{0}^{T_f} F(t)\, C\!\left(w_q(t) + \tfrac{L}{v(t)}\right)\, \mathrm{d}t.
\end{equation}
\( T_f \) is chosen large enough that \( l_q(T_f) = 0 \).

The control program is solved in a Pontryagin framework with mixed state-control consstraints: we introduce costates \( \lambda = (\lambda_{l}, \lambda_{x_r}, \lambda_{k},
\lambda_{q}, \lambda_{w})^{\mathsf{T}} \) for the traffic states and
\( \lambda_v \) for \( v \). A non‑negative multiplier \( \mu(t) \) enforces the inequality \eqref{eq:BLNapprox}. The augmented Hamiltonian is
\begin{align*}
\widetilde H &= F\, C\!\left(w_q + \tfrac{L}{v}\right)
+ \lambda^{\mathsf{T}} F_x
+ \lambda_v a
+ \mu \bigl(a - h(x,v)\bigr),
\end{align*}
where \( F_x \) stacks the right‑hand sides of \eqref{eq:OC-full}.
By Gamkrelidze's extension of Pontryagin's Maximum Principle, the first‑order necessary conditions are as follows:
\begin{enumerate}[label=\textit{(\roman*)}]
\item State ODEs \eqref{eq:OC-full} with \( a = \dot v \).
\item Costate equations
\( \dot\lambda = -\partial \widetilde H / \partial x \),
\( \dot\lambda_v = -\partial \widetilde H / \partial v \), with terminal
conditions \( \lambda(T_f) = 0 \), \( \lambda_v(T_f) = 0 \).
\item \emph{Variational inequality (Euler condition):} 
      The optimal speed-limit profile \(v^{*}(t)\), with
      derivative \(a^{*}(t) = \dot{v}^{*}(t)\), satisfies
      \[
        0 \;\in\;
        \left.\frac{\partial \widetilde{H}}{\partial a}
        (t, x, v, a)\right|_{(x, v^{*}, a^{*})}
        \;+\;
        N_{\,(-\infty,\,h(x(t), v^{*}(t))]}
          \bigl(\dot{v}^{*}(t)\bigr),
      \]
      where \(N_S(\bar{a})\) denotes the normal-cone to the set \(S\) at the point \(\bar{a}\). Equivalently,
      \[
        \left.\frac{\partial \widetilde{H}}{\partial a}
        \right|_{a = \dot{v}^{*}(t)}
        \bigl(w - \dot{v}^{*}(t)\bigr) \;\ge\; 0,
        \qquad
        \forall\, w \le h\bigl(x(t), v^{*}(t)\bigr).
      \]
      Because \(\widetilde{H}\) is affine in \(a\), this reduces to:  
      \begin{itemize}
        \item a \textit{boundary arc}, where 
              \(\dot{v}^{*}(t) = h\bigl(x(t), v^{*}(t)\bigr)\)
              whenever \(\partial_a \widetilde{H} > 0\);
        \item an \textit{interior (holding) arc}, where 
              \(\dot{v}^{*}(t) = 0\) when \(\partial_a \widetilde{H} = 0\).
      \end{itemize}

\item \emph{Complementarity and sign conditions:} 
      \begin{equation}
\begin{aligned}
  \mu(t) &\ge 0, \\
  g\bigl(x(t), v^{*}(t), \dot{v}^{*}(t)\bigr) &\le 0, \\
  \mu(t) \cdot g\bigl(x(t), v^{*}(t), \dot{v}^{*}(t)\bigr) &= 0,
  \quad \text{for all } t \in [0, T_f].
\end{aligned}
\end{equation}
\end{enumerate}

\subsection{Model–Predictive Control (MPC) realisation}
\label{sec:mpc}

The finite–horizon optimal–control problem \eqref{eq:OC-full},
together with the Pontryagin–type first-order conditions, can be embedded in a
receding-horizon loop.  At each sampling instant
$t_k=k\,\Delta T$ the current traffic state
\(
  x_k=(l_q, w_q, q_0, k_0, v)^{\!\top}(t_k)
\)
is measured or estimated; a short-horizon instance of
\eqref{eq:OC-full} (prediction window \(T_p\)) is solved; and the
resulting optimal control signal \(v^{*}\) is applied only for the next
interval \([t_k,t_{k+1})\).
The procedure is summarised in Algorithm~\ref{alg:mpc}.

\begin{algorithm}[H]
\caption{MPC for bounded-gradient VSL}
\label{alg:mpc}
\begin{algorithmic}[1]
\Require
  sampling time $\Delta T>0$, prediction horizon $T_p\gg\Delta T$,
  model parameters $(A_0,w,k_j,L)$, \\
  arrival-rate forecast $r(t)$ on $[t_k,t_k+T_p]$.
\State initialise $k\gets0$, measure initial state $x_0$,
       set $t_0\gets0$
\While{$t_k<T_{\mathrm{f}}$ \textbf{and} $l_q(t_k)>0$}
    \Statex\hspace{1em}\rule{\linewidth-3em}{.4pt}
    \State \textbf{Solve short-horizon OCP}
           \[
             \min_{v(\cdot),\,a(\cdot)}\;
               \int_{t_k}^{t_k+T_p} 
                 q_0(t)\,C^{\!*}\!\bigl(w_q(t)+\tau_f(t)\bigr)\,dt
           \]
           subject to the state equations \eqref{eq:OC-full},
           BLN gradient cap $a\le h(x,v)$,
           and initial condition $x(t_k)=x_k$ \label{line:solve}.
    \State \textbf{Apply} the first portion of the optimal profile:
           $v(t)\gets v^{*}(t)$ for $t\in[t_k,t_{k+1})$,
           where $t_{k+1}=t_k+\Delta T$.
    \State \textbf{Update} the state by simulating
           \eqref{eq:OC-full} over $[t_k,t_{k+1})$
           with the applied $v(\cdot)$,
           obtaining $x_{k+1}=x(t_{k+1})$.
    \State $k\gets k+1$; $t_k\gets t_{k-1}+\Delta T$.
    \Statex\hspace{1em}\rule{\linewidth-3em}{.4pt}
\EndWhile
\end{algorithmic}
\end{algorithm}

\section{Numerical Examples}
\label{section:numerical}

This section shows how the optimal-control approach developed above is implemented in practice and integrated into a receding-horizon framework for the test scenarios. All simulations use an indirect multiple-shooting method: We start by assuming a sequence of dynamic phases—typically a constant-speed plateau, a boundary ramp limited by the BLN gradient constraint, and a second plateau—with initial durations for each phase. The state equations are integrated forward in time while the costate system is integrated backward; a nonlinear solver adjusts the phase lengths and multipliers until the matching and complementarity conditions are met. When residuals remain significant, the phase pattern is updated based on the switching function~$\Phi$, and the shooting process is repeated.

To avoid local minima and improve convergence, the indirect solver uses the previous MPC solution as a starting point, shifted by one time step. 

The predictive controller updates every 60 seconds. Short-term arrival-rate forecasts from a first-order autoregressive filter drive the open-loop optimization over a prediction horizon of 4 hours. Forecast errors are naturally corrected at the next sampling step through measured states. 

The model parameters are based on the estimated values from Hammerl et al. \cite{Hammerl2024}, which were derived from the analysis of a section of Interstate I-880 N. A detailed description of the examined road section can be found in  \cite{Munoz2002}.
Traffic flow occurs along a homogeneous 10-kilometer corridor and follows a triangular fundamental diagram. The characteristic parameters include a free-flow speed of 120 km/h, a congestion wave speed of 24 km/h, and a maximum capacity of 8,400 vehicles per hour. Permissible speed limits range from 40 to 120 km/h. The time-dependent traffic demand is modeled using the following piecewise linear function: \[
d(t) =
\begin{cases}
1714.08\, t + 5571.84, & 0 \leq t \leq 2 \\
-1892.335\, t + 12784.67, & 2 < t \leq 4 \\
0, & t > 4
\end{cases}
\] The peak demand was increased compared to the original estimate to better demonstrate congestion caused by VSL adjustments. 

Initially, three model instances are simulated: a reference scenario ($\mu_1$ = 1) and two variants with moderate ($\mu_1$ = 2/3) and high priority ($\mu_2$ = 1/3) for the emission target, respectively. The emissions of individual pollutants are equally weighted ($\lambda_1 = \lambda_2 = \lambda_3 = 1/3$). The results are presented in Figures \ref{figure:vsl_tt_basic} and \ref{figure:em_cost_basic} and in Table \ref{table:sum_basic}.

\begin{figure}[H]
  \centering
  \begin{subfigure}[b]{0.9\textwidth}
    \centering
    \includegraphics[width=\textwidth]{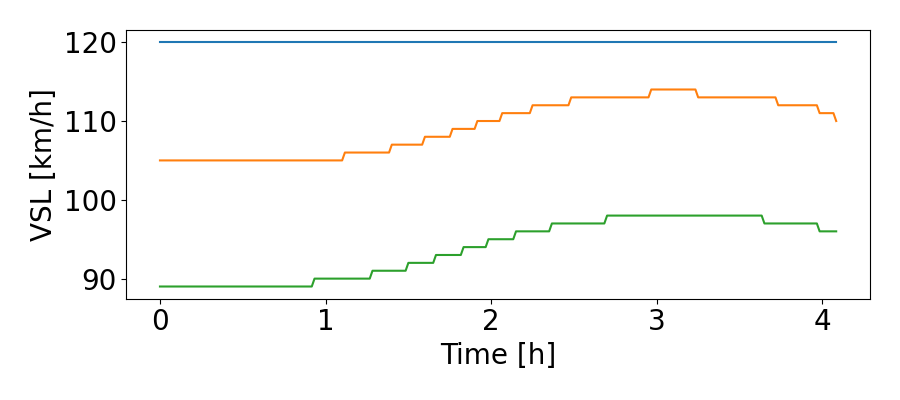}
    \caption{VSL trajectories}
  \end{subfigure}
  
  \vspace{1em} 
  
  \begin{subfigure}[b]{0.9\textwidth}
    \centering
    \includegraphics[width=\textwidth]{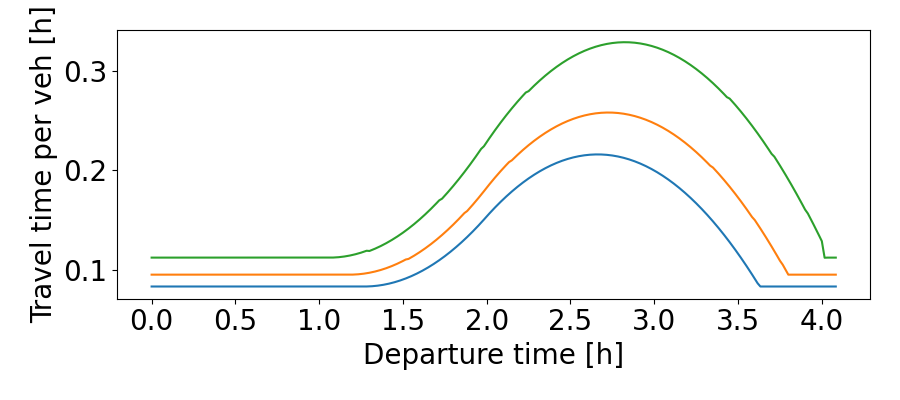}
    \caption{Travel times}
  \end{subfigure}
  
  \caption{VSL trajectories and travel times for all three scenarios: baseline (blue), moderate emission sensitivity (orange), and high emission sensitivity (green).}
  \label{figure:vsl_tt_basic}
\end{figure}

\begin{figure}[H]
  \centering
  \begin{subfigure}[b]{0.9\textwidth}
    \centering
    \includegraphics[width=\textwidth]{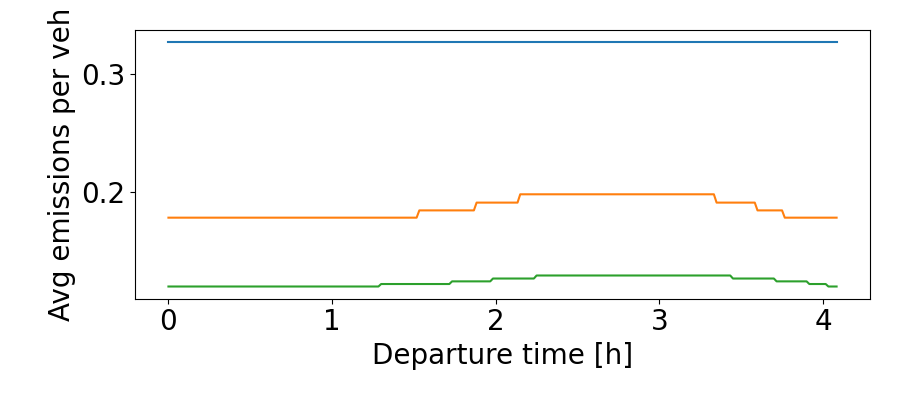}
  \end{subfigure}
  
  \vspace{1em}
  
  \begin{subfigure}[b]{0.9\textwidth}
    \centering
    \includegraphics[width=\textwidth]{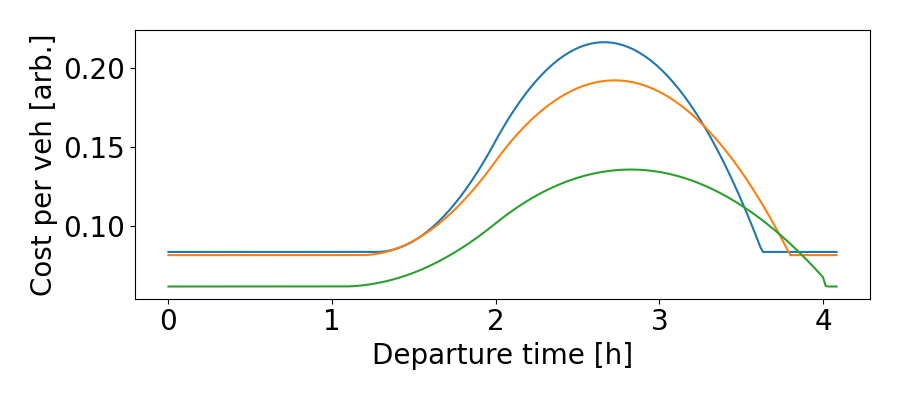}
  \end{subfigure}
  
  \caption{Emissions and combined running cost per vehicle for all three scenarios.}
  \label{figure:em_cost_basic}
\end{figure}

\begin{table}[H]
\centering
\begin{tabular}{lccc}
\toprule
\textbf{Scenario} &
\makecell{\textbf{Avg Travel}\\\textbf{Time [min/veh]}} &
\makecell{\textbf{Avg Emissions}\\\textbf{[g/veh]}} &
\makecell{\textbf{Avg Objective}\\\textbf{[arb./veh]}} \\
\midrule
TT only      & 7.89006  & 0.327090 & 0.131501 \\
33\% Emiss.  & 9.56334  & 0.187344 & 0.124994 \\
67\% Emiss.  & 12.39678 & 0.124697 & 0.093811 \\
\bottomrule
\end{tabular}
\caption{Comparison of travel time, emissions, and objective values for different VSL strategies.}
\label{table:sum_basic}
\end{table}

In the reference case, the form of the optimal trajectory is evident: both under free-flow conditions and during congestion formation, maximum throughput and free-flow speed are achieved by maintaining a constant speed limit of 120 km/h. For low and high emission priority, respectively, the set speed limit begins at values of approximately 105 and 90 km/h, which generate minimum costs for the respective weighting of the linear combination of travel time and vehicle emissions.
During congestion formation upstream of the VSL location, the actual travel time falls below the optimal travel time. The VSL algorithm compensates for this by raising the speed limit downstream of the VSL location (to approximately 105 and 115 km/h, respectively). The marginal effect of reduced free-flow travel time outweighs the capacity reduction induced by this increase at the VSL location.
A mathematically obvious but sometimes overlooked fact is that drivers typically have an approximately constant value of travel time, and the marginal benefit of speed increases diminishes at high speeds. Conversely, emitted pollutants are represented by a convex function of average speed, so that at high speeds, a speed reduction can achieve significant emission savings with only minimal travel time increases. 

Finally, the robustness of the control scheme against demand fluctuations is examined. For both emission priorities (33\% and 67\%), the deviations of actual from optimal costs specified in Table~\ref{table:sum_basic} are calculated using Monte Carlo simulation with 10{,}000 runs. The upstream demand is modeled as a temporally fluctuating random variable \(\tilde{d}(t) = d(t) + \varepsilon(t)\), calculated every second, where \(\varepsilon(t) \sim \mathcal{N}(0, 0.02^2 f^2(t))\). Two simulation series are conducted: one with uncorrelated fluctuations and a second with a correlation of 0.8 between consecutive values. The results for relative regret \((J_{\text{noise}} - J_{\text{base}})/J_{\text{base}}\) are presented by quantiles in Figures~\ref{figure:regret_33} and~\ref{figure:regret_67}, and for selected percentiles in Table~\ref{table:regret_summary}. The maximum deviation from the baseline value amounts to only approximately one standard deviation of the fluctuation term, even in unlikely extreme cases. For the lowest quantiles, the relative regret is even negative, so that favorable deviations - typically lower actual traffic volumes compared to forecasts - can be efficiently exploited by the algorithm.

\begin{figure}[H]
    \centering
    \includegraphics[width=0.8\textwidth]{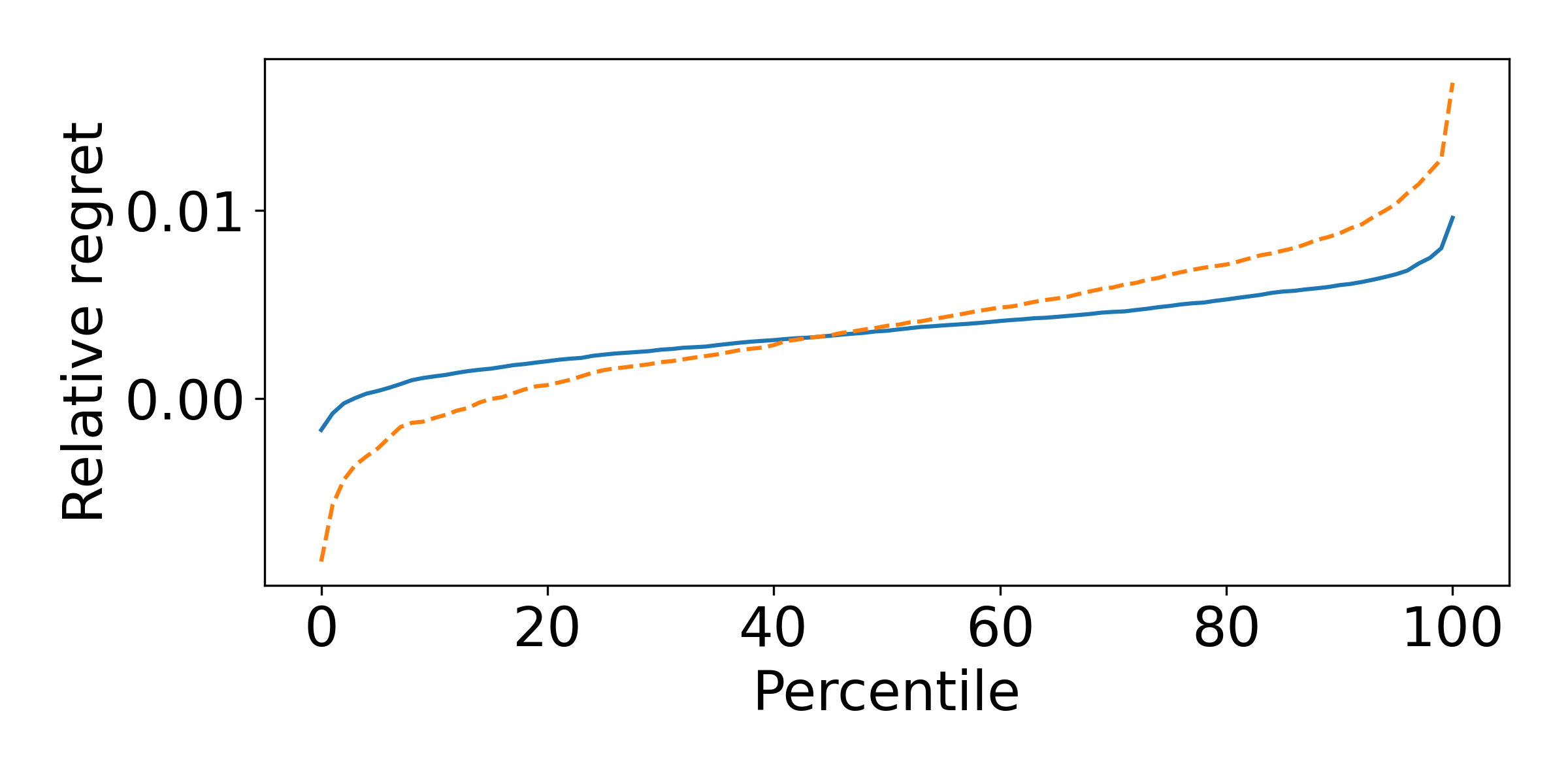}
    \caption{Percentiles of the relative regret for emission priority \(\sigma_2 = \frac{1}{3}\); uncorrelated demand noise shown in blue, correlated noise (\(\rho = 0.8\)) in orange.}
    \label{figure:regret_33}
\end{figure}

\begin{figure}[H]
    \centering
    \includegraphics[width=0.8\textwidth]{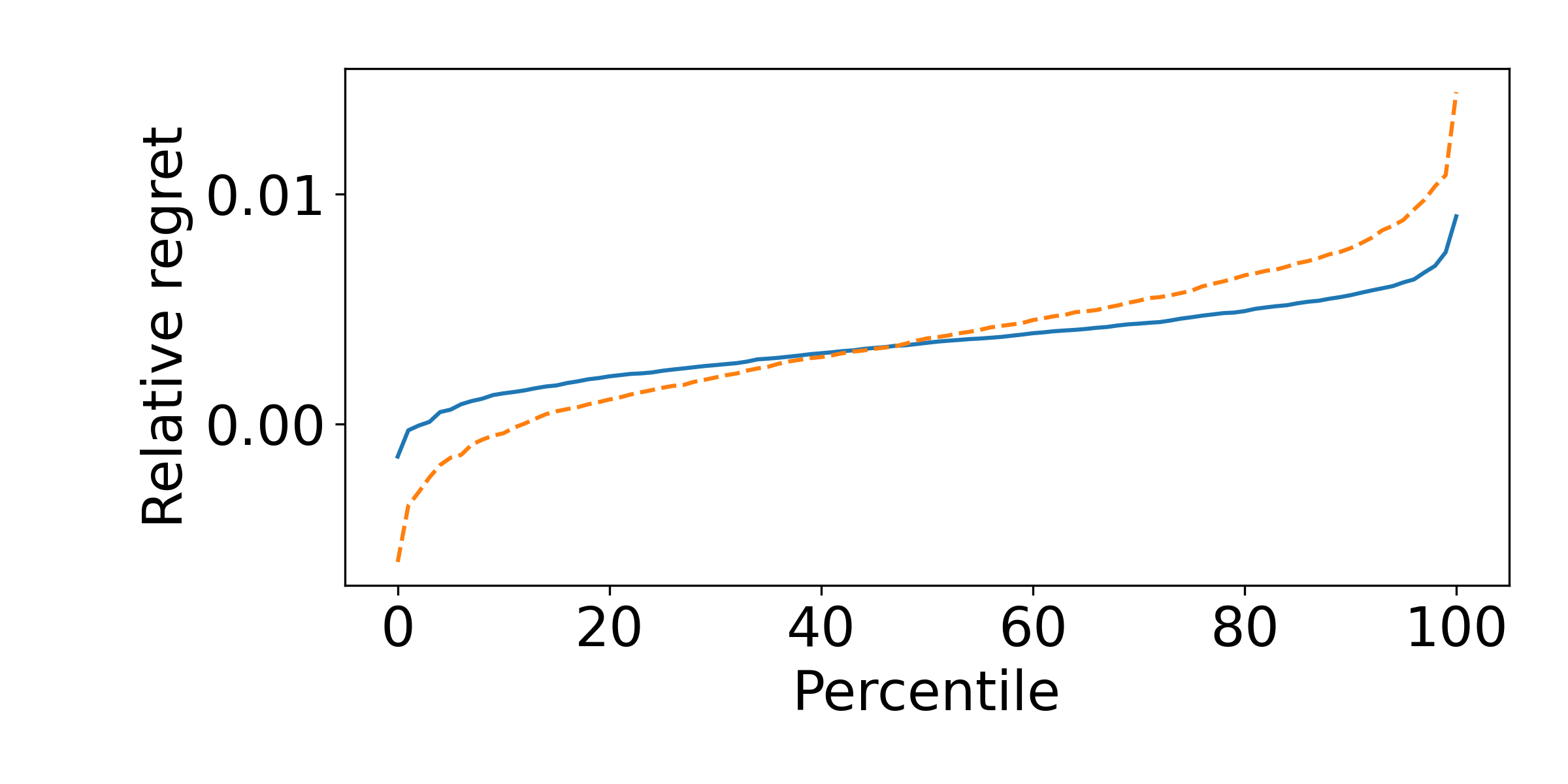}
    \caption{Percentiles of the relative regret for emission priority \(\sigma_2 = \frac{2}{3}\).}
    \label{figure:regret_67}
\end{figure}

\begin{table}[H]
\centering
\begin{tabular}{llcccc}
\toprule
 & Scenario & 50\% & 80\% & 95\% & Worst \\
\midrule
0 & 33\% (white) & 0.003602 & 0.005268 & 0.006612 & 0.009617 \\
1 & 33\% (corr)  & 0.003866 & 0.007129 & 0.010354 & 0.016807 \\
2 & 67\% (white) & 0.003525 & 0.004900 & 0.006148 & 0.009020 \\
3 & 67\% (corr)  & 0.003716 & 0.006457 & 0.008862 & 0.014415 \\
\bottomrule
\end{tabular}
\caption{Selected quantiles and worst-case values of the relative regret for different emission priorities and noise correlation structures.}
\label{table:regret_summary}
\end{table}

We assess the robustness of the MPC with a two–factor analysis of variance (ANOVA) \cite{Montgomery2017}. The factors are (i) the objective weighting, $\mu_{\mathrm{emiss}}\!\in\!\{0.33,0.67\}$, and (ii) the disturbance model, \textit{white} versus \textit{AR(1)} demand noise ($\rho = 0.8$), while every Monte–Carlo seed provides a replicate. ANOVA decomposes the total scatter of the relative–regret metric of the 1\,000 runs into additive contributions from the factors and their interaction:

\begin{table}[htbp]
\centering
\small
\begin{tabular}{lccc}
\toprule
\textbf{Source} & \textbf{\% Variance} & \textbf{$F$} & \textbf{$p$--value} \\
\midrule
$\mu$--setting (33\,\% vs 67\,\%)                       & 0.02\,\%   & 0.77 & 0.38 \\
Noise type (white vs AR(1), $\rho = 0.8$)               & 0.21\,\%   & 8.51 & 0.004 \\
Interaction $\mu \times$ noise                          & 0.003\,\%  & 0.14 & 0.71 \\
Residual (run--to--run randomness)                      & 99.77\,\%  & \multicolumn{2}{c}{---} \\
\bottomrule
\end{tabular}
\caption{ANOVA decomposition of the relative–regret metric over 1\,000 Monte–Carlo runs.}
\label{tab:anova}
\normalsize
\end{table}

The \emph{noise type} is formally significant ($p<0.01$) but explains only ${\approx}\,0.2\%$ of the total variance in regret, while the choice of emission weight and the interaction term are both statistically and practically negligible.  
Consequently, {${\sim}\,99.8\%$} of the variability stems from the $\pm2\%$ second-by-second demand perturbations themselves, not from the controller’s tuning or from the correlation structure of the disturbance. Hence the MPC’s performance is essentially invariant to these structured uncertainties, confirming its robustness against realistic demand fluctuations.

\section{Conclusion}
\label{section:conclusion}
This paper has presented a novel Model Predictive Control framework for reducing traffic emissions on suburban freeways through the implementation of Variable Speed Limits. Our approach leverages a bounded-acceleration extension of the LWR traffic flow theory to achieve computationally efficient control while maintaining behavioral realism in congestion dynamics.
Our primary contribution demonstrates that by applying a BA extension of LWR theory, vehicle travel times essential for control applications can be calculated analytically and efficiently from kinematic principles. While analytical works on state-of-the-art higher-order macroscopic models, which derive their foundations from fluid mechanics and gas particle movement, typically focus on derivations of flow and density relationships, vehicle travel times represent the practically more critical parameter for practical control applications. Through our first-order bounded acceleration extension, by extending the methodology established in \cite{Lebacque2003} and \cite{Jin2018}, we successfully derive convenient analytical solutions while qualitatively capturing all essential phenomena of congestion formation and dissipation at bottlenecks.
The bounded acceleration framework provides several key advantages over traditional LWR models. By incorporating finite acceleration and reaction time constraints that reflect realistic vehicle capabilities, our model addresses the well-known deficiency of infinite accelerations inherent in classical kinematic wave theory. This enhancement enables more accurate prediction of capacity recovery processes and provides a more realistic representation of driver behavior during congestion transitions. Furthermore, our analytical treatment of the acceleration phase establishes closed-form expressions for flow evolution at Variable Speed Limit locations, enabling real-time optimization without requiring computationally intensive numerical integration schemes.
Our integration of the COPERT V emission model within the MPC framework represents a significant advancement in emission-aware traffic control. By incorporating speed-dependent emission factors for multiple pollutants (CO, NOx, and HC) directly into the optimization objective, we demonstrate that substantial emission reductions can be achieved with only modest increases in travel times, highlighting the significant potential for environmental benefits through intelligent speed management.
The robustness analysis conducted through Monte Carlo simulations with demand uncertainty further validates the practical applicability of our approach. Even under significant demand fluctuations with temporal correlation, the relative regret remains bounded within acceptable limits, demonstrating the controller's ability to adapt to real-world traffic variability. This robustness is particularly important for suburban freeway applications where demand patterns can be highly variable and difficult to predict accurately.

Several promising research directions emerge from this work that warrant further investigation. From a traffic flow modelling standpoint, it could be insightful to examine the effects of applying state-of-the-art higher-order traffic flow models to this control setting and comparing outcomes with our modeling approach. Higher-order models typically explain dynamic changes in traffic conditions through a duality of driver reaction and anticipation mechanisms, whereas the boundary conditions employed in our BA LWR model correspond primarily to reaction terms. This fundamental difference has important implications for control performance that merit careful study. First-order kinematic wave models, which incorporate neither reaction nor anticipation terms, face the inherent risk of inducing the exact capacity drop phenomenon at VSL locations that they seek to prevent at bottleneck locations. This occurs due to their inability to account for the endogenous nature of capacity drops. Additionally, these models may overestimate the efficacy of control speed increases by overestimating the propagation speed of signal changes. Conversely, bounded acceleration models like ours, which incorporate only reaction terms(see also e.g. \cite{Leclercq2007}, \cite{Piacentini2018},\cite{Wang2022}) for other models and applications), may penalize control signal changes too severely by assuming that any capacity drop incurred by such changes is permanent. These models potentially underestimate drivers' ability to recover initial equilibrium states through anticipation of traffic conditions ahead.
A systematic comparison between our BA-LWR approach and second-order models incorporating both reaction and anticipation terms would provide valuable insights into the trade-offs between model complexity, computational efficiency, and control performance. Such studies could inform the development of hybrid approaches that capture the most beneficial aspects of each modeling paradigm while maintaining computational tractability for real-time applications.
Another significant research opportunity lies in extending this control approach to non-spatial or hybrid representations of entire transportation networks (see e.g. \cite{Geroliminis2008},\cite{Daganzo2008}, \cite{Jin2020}, \cite{Haddad2013}). Such extensions would enable a holistic approach to vehicle pollution management in urban environments, moving beyond freeway corridors to consider network-wide interactions and system-level optimization. This expansion could address important questions about the spatial and temporal coordination of multiple VSL and metering systems and their cumulative impact on urban air quality.
Additionally, future work should explore the integration of emerging technologies such as connected and autonomous vehicles into the control framework. These technologies offer the potential for more precise implementation of speed recommendations and could enable anticipatory control strategies that leverage vehicle-to-infrastructure communication. The development of adaptive emission models that account for evolving vehicle fleet compositions and engine technologies would also enhance the long-term applicability of emission-based traffic control strategies.

\end{document}